\declaretheoremstyle[bodyfont=\normalfont]{noncursive}
\declaretheorem{theorem}
\numberwithin{equation}{section}
\def\1#1{\overline{#1}}
\def\2#1{\widetilde{#1}}
\def\3#1{\widehat{#1}}
\def\4#1{\mathbb{#1}}
\def\5#1{\frak{#1}}
\def\6#1{{\mathcal{#1}}}
\title{Lie symmetries of a third order PDE system}
\author {Reza Dastranj}
\address{Department of Mathematics, Masaryk University in Brno}
\email{dastranj@math.muni.cz}
\keywords{Lie symmetry, Third order PDEs, CR-geometry}
\begin{document}

\maketitle

\date{\today}

\begin{abstract}
In this paper we show that a third order PDE system that is a general form of a CR-geometry PDE system has at most a ten-dimensional Lie symmetry algebra. We also show that this estimate is precise.
\end{abstract}

\section{Introduction}
In this paper we apply the Lie method to study symmetries of the following third PDE system:
\[
    (\MakeUppercase {s}):=\left\{
                \begin{array}{ll}
                  u_{{2}}=F \left( x,y,u_{{}},u_{{1}} \right)\\
                   u_{{1,1,1}}=G \left( x,y,u_{{}},u_{{1}} \right)
                \end{array}
              \right.
  \]

where the coefficient of $u_1^2$ in the first equation is nonzero. This PDE system is a general form of a CR-geometry PDE system, see \cite{[1]}. 
\section{Main results}
Without loss of generality assume that $F$ is represented by a power
series with respect to $u_1$:
 
  \[
    (\MakeUppercase {s}):=\left\{
                \begin{array}{ll}
                  u_{{2}}={\it F1} \left( x,y,u_{{}} \right) u_{{1}}+{\it F2} \left( x,y
,u_{{}} \right) {u_{{1}}}^{2}+{\it F3} \left( x,y,u_{{}},u_{{1}} \right) {u_{{1}}}^{3}\\
                   u_{{1,1,1}}=G \left( x,y,u_{{}},u_{{1}} \right)
                \end{array}
              \right.
  \]
where $F2\neq 0$.

Any infinitesimal Lie symmetry $V$ of $(S)$ can be written
\begin{equation}{\label{21}}
V=\xi(x,y,u)\partial x+\tau (x,y,u)\partial y+\phi(x,y,u) \partial u
\end{equation}
for some smooth functions $\xi$, $\tau$, $\phi$, see\cite{[2],[3],[4],[5]}.

We fix a point $(x,y,u)$ and we call the vector 
\begin{eqnarray}{\label{22}}
C(x,y,u)&=&\Big(\xi(x,y,u), \tau(x,y,u), \phi(x,y,u), \xi_x(x,y,u), \tau_x(x,y,u), \phi_x(x,y,u),\\ \nonumber 
&& \phi_u(x,y,u), \xi_{x,x}(x,y,u), \tau_{x,x}(x,y,u),\phi_{x,x}(x,y,u)\Big)
\end{eqnarray}
the \textit{initial Tayolr coefficients} of the infinitesimal symmetry $V$ at the point $(x,y,u)$.
\begin{theorem}
An infinitesimal Lie symmetry of $(S)$ is uniquely determined by the ten initial Taylor coefficients
\begin{equation}{\label{23}}
C(x,y,u)=(\xi, \tau, \phi, \xi_x, \tau_x, \phi_x, \phi_u, \xi_{x,x}, \tau_{x,x},\phi_{x,x}).
\end{equation}
\end{theorem}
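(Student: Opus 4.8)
\medskip
\noindent\textbf{Proof strategy.}
The plan is to convert the symmetry requirement into a closed first-order system for the jet of $(\xi,\tau,\phi)$ and to verify that precisely the ten entries of $C(x,y,u)$ survive as free parameters; the linear map $V\mapsto C(x,y,u)$ is then injective, which is exactly the claim (and incidentally yields the ten-dimensional bound of the abstract). First I would apply the third prolongation $V^{(3)}$ of $V$ to the two defining relations of $(S)$ and restrict to the solution set $\{u_2=F,\ u_{1,1,1}=G\}$. Writing the prolongation coefficients $\phi^{1},\phi^{2},\phi^{11},\phi^{111}$ attached to $u_1,u_2,u_{11},u_{111}$ by the usual recursion
\[
\phi^{J,i}=D_i\phi^{J}-u_{J,1}\,D_i\xi-u_{J,2}\,D_i\tau,\qquad D_1=D_x,\ D_2=D_y,
\]
the two determining equations read
\[
\phi^{2}=\xi F_x+\tau F_y+\phi F_u+\phi^{1}F_{u_1},\qquad
\phi^{111}=\xi G_x+\tau G_y+\phi G_u+\phi^{1}G_{u_1},
\]
to be read after the substitutions $u_2=F$, $u_{1,1,1}=G$ and their differential consequences (so $u_{12}=D_xF$, $u_{112}=D_x^2F$, and so on). After these substitutions the only surviving jet variables are $u_1$ and $u_{11}$, the first equation being free of $u_{11}$ and the second being affine in it.

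Next I would extract the determining equations by comparing the coefficients of the powers of $u_1$ (and of $u_{11}$), the Taylor coefficients of $F$ and $G$ in $u_1$ supplying the structure constants. The crucial structural point, and where the hypothesis $F2\neq 0$ enters, is that the low-order relations can be solved for every first- and second-order partial derivative of $\xi,\tau,\phi$ that does \emph{not} occur in \eqref{23}. Concretely, I expect the first equation, compared in powers of $u_1$, to express the missing first-order derivatives $\xi_u,\tau_u,\xi_y,\tau_y,\phi_y$ in terms of the listed data $\xi,\tau,\phi,\xi_x,\tau_x,\phi_x,\phi_u$, with the nonvanishing of the quadratic coefficient $F2$ being precisely what makes this subsystem nondegenerate; and the second equation, together with the $y$- and $u$-derivatives of the already-solved first-order relations, to express every second derivative except the three pure ones $\xi_{xx},\tau_{xx},\phi_{xx}$. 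This reproduces the count $10$: each of $\xi,\tau,\phi$ keeps its value and its first two $x$-derivatives, and $\phi_u$ is the single extra free parameter.

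Finally I would promote this to a closure statement for the whole jet. Differentiating the solved relations and re-substituting shows that every partial derivative of $\xi,\tau,\phi$ of order $\ge 3$ is a universal expression in the ten quantities of \eqref{23}, with coefficients built from $F1,F2,F3,G$ and their derivatives at the base point. Equivalently, the ten-component vector $C$ satisfies a complete first-order system
\[
\partial_x C=A_1C,\qquad \partial_y C=A_2C,\qquad \partial_u C=A_3C
\]
for matrix functions $A_i(x,y,u)$, so along any path through $(x,y,u)$ it obeys a linear ODE. Uniqueness for this complete system forces a symmetry with $C(x,y,u)=0$ to have $C\equiv 0$, hence to vanish identically, which gives injectivity of $V\mapsto C(x,y,u)$. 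The main obstacle is the middle step: carrying out the coefficient comparison carefully enough to confirm that exactly the non-listed derivatives are principal (solvable) while $\xi_{xx},\tau_{xx},\phi_{xx}$ and $\phi_u$ remain parametric, and that no new independent parameter is forced at second or higher order -- this is exactly the place where the non-vanishing of $F2$ is indispensable.
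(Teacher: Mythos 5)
Your overall strategy is the same as the paper's: prolong $V$ to third order, restrict the invariance conditions to the solution manifold by substituting $u_2=F$, $u_{111}=G$, $u_{12}=D_xF$, $u_{112}=D_x^2F$, split into determining equations by comparing coefficients of powers of $u_1$ and $u_{11}$, solve for every first-, second- and third-order derivative of $\xi,\tau,\phi$ not listed in \eqref{23}, and conclude by uniqueness for the resulting complete linear first-order system in the ten-component vector $C$. Your closing formulation $\partial_xC=A_1C$, $\partial_yC=A_2C$, $\partial_uC=A_3C$ is in fact a cleaner packaging of what the paper disposes of with a one-line appeal to the chain rule.

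There is, however, one concrete misstatement that would derail the computation if followed literally: after the substitutions, the second determining equation is \emph{quadratic} in $u_{11}$, not affine. The prolongation coefficient $\phi^{xxx}$ contributes $-3\xi_u u_{11}^2$ directly, and the terms $3\tau_xu_{112}$, $3\tau_uu_1u_{112}$, $3\tau_uu_{11}u_{12}$ inject further $u_{11}^2$ terms through \eqref{25}. The coefficient of $u_{11}^2$ is exactly equation \eqref{210}, and it is this equation -- not the first determining equation, as you suggest -- that determines $\xi_u$ and $\tau_u$: its $u_1$-coefficient gives $3F3_0\tau_x+2F2\tau_u=0$, which is the one and only place where the hypothesis $F2\neq 0$ is used (to solve $\tau_u=r\tau_x$), and its $u_1^0$-coefficient gives $\xi_u$ in terms of $\tau_u,\tau_x$. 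The first determining equation \eqref{29} only yields $\tau_y,\xi_y,\phi_y$. If you treat the second equation as affine in $u_{11}$ you lose \eqref{210} entirely, and with it both $\tau_u,\xi_u$ and the role of $F2\neq 0$; the remaining relations do not close the system. With that correction the rest of your outline, including the count of the parametric derivatives $\xi_{xx},\tau_{xx},\phi_{xx},\phi_u$ and the closure at third order, matches the paper's proof.
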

\begin{proof}
By differentiating of the PDE system, we have the following equations:
\begin{eqnarray}{\label{24}}
u_{12}&=&F3_u u_1^4+F3_{u_1} u_1^3 u_{11}+F2_u u_1^3+F3_x u_1^3+3F3 u_1^2 u_{11}+F1_u u_1^2+F2_x u_1^2+\\ \nonumber 
&& 2F2 u_1 u_{11}+F1_x u_1+F1 u_{11},
\end{eqnarray}
and
\begin{eqnarray}{\label{25}}
u_{112}&=&2F1_xu_{11}+2F2u_{11}^2+F1G+6F3_x u_1^2u_{11}+4F2_xu_1u_{11}+7F3_uu_1^3u_{11}+\\ \nonumber
&& 5F2_uu_1^2u_{11}+3F1_uu_1u_{11}+6F3_{u_1}u_1^2u_{11}^2+6F3u_1u_{11}^2+F3_{u_1}u_1^3G+3F3u_1^2G+\\ \nonumber 
&&2F2u_1G+F3_{u_1u_1}u_1^3u_{11}^2+2F3_{xu_1}u_1^3u_{11}+2F3_{uu_1}u_1^4u_{11}+F3_{xx}u_1^3+F1_{xu}u_1^2+\\ \nonumber 
&&F3_{uu}u_1^5+F2_{uu}u_1^4+F1_{uu}u_1^3+F1_{xx}u_1+F2_{xx}u_1^2+2F3_{xu}u_1^4+2F2_{xu}u_1^3.
\end{eqnarray}
The third prolongation of $V$ is given by:
\begin{eqnarray}{\label{26}}
Pr^{(3)}V&=&\xi\partial x+\tau \partial y+\phi \partial u+\phi^x \partial u_1+\phi^y \partial u_2+\phi^{xx} \partial u_{11}+\phi^{xy} \partial u_{12}+\phi^{yy} \partial u_{22}+\\ \nonumber 
&&\phi^{xxx} \partial u_{111}+\phi^{xxy} \partial u_{112}+\phi^{xyy} \partial u_{122}+\phi^{yyy} \partial u_{222}=\\ \nonumber
&&\xi\partial x+\tau \partial y+\phi \partial u-\Big(-\phi_x-\phi_u u_1+(\xi_x+\xi_u u_1)u_1+(\tau_x + \tau_u u_1)u_2\Big)\partial u_1-\\ \nonumber
&&\Big(-\phi_y-\phi_u u_2+(\xi_y+\xi_u u_2)u_1+(\tau_y + \tau_u u_2)u_2\Big)\partial u_2-\Big(\xi_{uu}u_1^3+\tau_{uu}u_1^2u_2+\\ \nonumber 
&&3\xi_u u_1 u_{11}+2\tau_u u_1 u_{12}+\tau_u u_2 u_{11}+2\xi_{xu}u_1^2+2\tau_{xu}u_1u_2-\phi_{uu}u_1^2-\phi_u u_{11}+\\ \nonumber
&&2\xi_x u_{11}+2\tau_x u_{12}-2\phi_{xu}u_1+\xi_{xx}u_1+\tau_{xx}u_2-\phi_{xx}\Big)\partial u_{11}-\Big(-\phi_{xy}-\phi_{xu}u_2-\\ \nonumber
&&(-\xi_{xy}-\xi_{xu}u_2)u_1-(-\tau_{xy}-\tau_{xu}u_2)u_2-(\phi_{yu}+\phi_{uu}u_2+(-\xi_{yu}-\xi_{uu}u_2)u_1+\\ \nonumber
&&(-\tau_{yu}-\tau_{uu}u_2)u_2)u_1-(-\xi_y -\xi_u u_2)u_{11}-(\phi_u -\xi_u u_1-2\tau_u u_2 -\tau_y)u_{12}+(\xi_x +\\ \nonumber
&&\xi_u u_1)u_{12}+(\tau_x + \tau_u u_1)u_{22}\Big)\partial u_{12}+\phi^{yy} \partial u_{22}-\Big(3\tau_u u_{11} u_{12}+4\xi_u u_1 u_{111} +\\ \nonumber
&&\tau_u u_2 u_{111}+3 \tau_u u_1 u_{112}+6\xi_{uu} u_1 ^2 u_{11} +3\tau_{uu}u_1 ^2 u_{12}-3\phi_{uu} u_1 u_{11}+3 \tau_{xxu} u_1 u_2+\\ \nonumber
&&3 \tau_{xuu}u_1^2 u_2+9\xi_{xu}u_1 u_{11}+3\tau_{xu}u_2 u_{11}+6\tau_{xu}u_1u_{12}+\tau_{uuu}u_{1}^3u_2+3\tau_{uu}u_1u_2u_{11}-\\ \nonumber
&&3\phi_{xxu}u_1+3\xi_{xxu}u_1^2+\xi_{xxx}u_1+\tau_{xxx}u_2+3\xi_{xuu}u_1^3-3\phi_{xuu}u_1^2-3\phi_{xu}u_{11}+\\ \nonumber
&&3\xi_{xx}u_{11}+3\tau_{xx}u_{12}+\xi_{uuu}u_1^4-\phi_{uuu}u_1^3-\phi_{xxx}+3\xi_u u_{11}^2-\phi_u u_{111} +3\xi_x u_{111}+\\ \nonumber
&&3 \tau_x u_{112}\Big)\partial u_{111}+\phi^{xxy}\partial u_{112}+\phi^{xyy} \partial u_{122}+\phi^{yyy} \partial u_{222}.
\end{eqnarray}
Now, by applying infinitesimal criterion of invariance under a one-parameter
Lie group of transformations, we obtain the following equations:
\begin{eqnarray}{\label{27}}
&&\xi(-F1_x u_1-F2_x u_1^2-F3_x u_1^3)+\tau(-F1_y u_1-F2_y u_1^2-F3_y u_1^3)+\phi(-F1_u u_1-\\ \nonumber
&&F2_u u_1^2-F3_u u_1^3)+(\phi_x+\phi_u u_1-(\xi_x+\xi_u u_1)u_1-(\tau_x+\tau_u u_1)u_2)(-F1-2F2u_1-\\ \nonumber 
&&F3_{u_1}u_1^3-3F3u_1^2)+\phi_y +\phi_u u_2-(\xi_y +\xi_u u_2)u_1-(\tau_y +\tau_u u_2)u_2=0,
\end{eqnarray}
and
\begin{eqnarray}{\label{28}}
&&\phi_{xxx}-3\tau_u u_{11}u_{12}-4\xi_u u_{1}u_{111}-\tau_u u_{2}u_{111}-3\tau_u u_{1}u_{112}-3\tau_{xxu} u_{1}u_{2}-3\tau_{xuu} u_{1}^2u_{2}-\\ \nonumber 
&&9\xi_{xu} u_{1}u_{11}-3\tau_{xu} u_{2}u_{11}-6\tau_{xu} u_{1}u_{12}-\tau_{uuu} u_{1}^3u_{2}-6\xi_{uu} u_{1}^2u_{11}-3\tau_{uu} u_{1}^2u_{12}+3\phi_{uu} u_{1}u_{11}-\\ \nonumber 
&&3\tau_{uu} u_{1}u_{2}u_{11}-3\xi_{u}u_{11}^2+\phi_{u}u_{111}-3\xi_{x}u_{111}-3\tau_{x}u_{112}-\xi G_x-\tau G_y-\phi G_u-(\phi_x +\phi_u u_1 - \\ \nonumber 
&&(\xi_x+\xi_u u_1)u_1-(\tau_x+\tau_u u_1)u_2)G_{u_1}+3\phi_{xxu}u_1-3\xi_{xxu}u_1^2-\xi_{xxx}u_1-\tau_{xxx}u_2-3\xi_{xuu}u_1^3+\\ \nonumber 
&&3\phi_{xuu}u_1^2+3\phi_{xu}u_{11}-3\xi_{xx}u_{11}-3\tau_{xx}u_{12}-\xi_{uuu}u_1^4+\phi_{uuu}u_1^3=0.
\end{eqnarray}
By replacing $u_{2}$, $u_{111}$, $u_{12}$ and $u_{112}$  by  ${\it F1} \left( x,y,u_{{}} \right) u_{{1}}+{\it F2} \left( x,y
,u_{{}} \right) {u_{{1}}}^{2}+{\it F3} \left( x,y,u_{{}},u_{{1}} \right) {u_{{1}}}^{3}$, $G(x,y,u,u_1)$, (\ref{24}), and (\ref{25}) respectively, whenever they occur, and equating 
the coefiicients of the various powers of $u_{11}$ to zero, we find the determining equations for the symmetry group of the system $(S)$ to be the following: 
\begin{eqnarray}{\label{29}}
&&-\xi_y u_1+F1\xi_x u_1+2F2\xi_x u_1^2+F3_{u_1}\xi_x u_1^4+3F3\xi_x u_1^3+F2\xi_u u_1^3+F3_{u_1}\xi_u u_1^5+\\ \nonumber 
&&2F3\xi_u u_1^4+3F3^2\tau_x u_1^5+2F2^2\tau_x u_1^3+F1^2\tau_x u_1+2F3^2\tau_u u_1^6+F2^2\tau_u u_1^4-F3\tau_y u_1^3-\\ \nonumber
&&F2\tau_y u_1^2-F1\tau_y u_1- F1_{x}\xi u_1-F2_{x}\xi u_1^2-F3_{x}\xi u_1^3-F1_{y}\tau u_1-F2_{y}\tau u_1^2-\\ \nonumber 
&&F3_{y}\tau u_1^3-F1_{u}\phi u_1-F2_{u}\phi u_1^2-F3_{u}\phi u_1^3-2F2\phi_x u_1-F3_{u_1}\phi_x u_1^3-3F3\phi_x u_1^2-\\ \nonumber 
&&F2\phi_u u_1^2-F3_{u_1}\phi_u u_1^4-2F3\phi_u u_1^3-F1\phi_x+ F1F2\tau_u u_1^3+F2F3_{u_1}\tau_u u_1^6+\\ \nonumber 
&&F1F3_{u_1}\tau_u u_1^5+4F1F3\tau_x u_1^3+5F2F3\tau_x u_1^4+F3F3_{u_1}\tau_x u_1^6+3F1F2\tau_x u_1^2+\\ \nonumber
&&F2F3_{u_1}\tau_x u_1^5+F1F3_{u_1}\tau_x u_1^4+2F1F3\tau_u u_1^4+3F2F3\tau_u u_1^5+F3F3_{u_1}\tau_u u_1^7+\phi_y=0, 
\end{eqnarray}
and
\begin{eqnarray}{\label{210}}
&&\xi_u + F1 \tau_u +2F2 \tau_x +(6 F3 \tau_x  +4F2\tau_u)u_{{1}}+(6F3_{u_{{1}}} \tau_x  +9 F3 \tau_u ) u_{{1}}^{2}+\\ \nonumber
&& (F3_{u_{{1}}u_{{1}}} \tau_x  +7 F3_{u_{{1}}} \tau_u)  u_{{1}}^{3}+ F3_{u_{{1}}u_{{1}}} \tau_u u_{{1}}^{4}=0,
\end{eqnarray}
and
\begin{eqnarray}{\label{211}}
&&3\phi_{xu}-3\xi_{xx}-3F1\tau_{xx}-6F1_x\tau_x-9\xi_{xu}u_1+3\phi_{uu}u_1-6\xi_{uu}u_{1}^2- 6F1\tau_{uu}u_1^2-\\ \nonumber && 6F3_{xu_{1}}\tau_x u_1^3-6F3_{uu_{1}}\tau_x u_1^4-3F3_{u_{1}}\tau_{xx} u_1^3-9F3\tau_{xx} u_1^2-6F2\tau_{xx} u_1-6F3_{xu_1}\tau_{u} u_1^4-\\ \nonumber &&6F3_{uu_1}\tau_{u} u_1^5-15F2\tau_{xu} u_1^2-21F3\tau_{xu} u_1^3-9F1\tau_{xu} u_1-6F3_{u_1}\tau_{xu} u_1^4-12F3\tau_{uu} u_1^4-\\ \nonumber &&3F3_{u_1}\tau_{uu} u_1^5-9F2\tau_{uu} u_1^3-21F3_x\tau_{u} u_1^3-15F2_x\tau_{u} u_1^2-12F1_u\tau_{u} u_1^2-9F1_x\tau_{u} u_1-\\ \nonumber &&24F3_u\tau_{u} u_1^4-18F2_u\tau_{u} u_1^3-18F3_x\tau_{x} u_1^2-12F2_x\tau_{x} u_1-21F3_u\tau_{x} u_1^3-\\ \nonumber &&15F2_u\tau_{x} u_1^2-9F1_u\tau_{x} u_1=0,
\end{eqnarray}
and
\begin{eqnarray}{\label{212}}
&&G\phi_u -3G\xi_x -G_{u_1}\phi_x+\phi_{xxx}-6F3_{xu}\tau_{x} u_1^4-6F2_{xu}\tau_{x} u_1^3-6F1_{xu}\tau_{x} u_1^2-\\ \nonumber
&&3F3_{uu}\tau_{x} u_1^5-3F2_{uu}\tau_{x} u_1^4-3F1_{uu}\tau_{x} u_1^3-3F1_{xx}\tau_{x} u_1-3F2_{xx}\tau_{x} u_1^2-3F3_{xx}\tau_{x} u_1^3-\\ \nonumber 
&&F3\tau_{xxx} u_1^3-F2\tau_{xxx} u_1^2-F1\tau_{xxx} u_1-3F3_u\tau_{xx} u_1^4-3F2_u\tau_{xx} u_1^3-3F3_x\tau_{xx} u_1^3-\\ \nonumber
 &&3F1_u\tau_{xx} u_1^2-3F2_x\tau_{xx} u_1^2-3F1_x\tau_{xx} u_1-3F3\tau_{xxu} u_1^4-3F2\tau_{xxu} u_1^3-3F1\tau_{xxu} u_1^2-\\ \nonumber 
&&3F2\tau_{xuu} u_1^4-3F3\tau_{xuu} u_1^5-3F1\tau_{xuu} u_1^3-6F3_u\tau_{xu} u_1^5-6F2_u\tau_{xu} u_1^4-6F3_x\tau_{xu} u_1^4-\\ \nonumber 
&&6F1_u\tau_{xu} u_1^3-6F2_x\tau_{xu} u_1^3-6F1_x\tau_{xu} u_1^2-F2\tau_{uuu} u_1^5-F3\tau_{uuu} u_1^6-F1\tau_{uuu} u_1^4-\\ \nonumber 
&&3F3_u\tau_{uu} u_1^6-3F2_u\tau_{uu} u_1^5-3F3_x\tau_{uu} u_1^5-3F1_u\tau_{uu} u_1^4-3F2_x\tau_{uu} u_1^4-3F1_x\tau_{uu} u_1^3-\\ \nonumber 
&&6F3_{xu}\tau_{u} u_1^5-6F2_{xu}\tau_{u} u_1^4-6F1_{xu}\tau_{u} u_1^3-3F3_{uu}\tau_{u} u_1^6-3F2_{uu}\tau_{u} u_1^5-3F1_{uu}\tau_{u} u_1^4-\\ \nonumber 
&&3F2_{xx}\tau_{u} u_1^3-3F1_{xx}\tau_{u} u_1^2-3F3_{xx}\tau_{u} u_1^4-G_{u_1}\phi_u u_1+G_{u_1}\xi_x u_1+G_{u_1}\xi_u u_1^2-\\ \nonumber 
&&3F1G\tau_x -4G\xi_u u_1+\phi_{uuu}u_1^3+3\phi_{xxu}u_1-3\xi_{xxu}u_1^2-\xi_{xxx}u_1-3\xi_{xuu}u_1^3+3\phi_{xuu}u_1^2-\\ \nonumber
&&\xi_{uuu}u_1^4+F1G_{u_1}\tau_x u_1+F3G_{u_1}\tau_u u_1^4+F2G_{u_1}\tau_u u_1^3+F1G_{u_1}\tau_u u_1^2-3F3_{u_1}G\tau_x u_1^3-\\ \nonumber
&&9F3G\tau_x u_1^2-6F2G\tau_x u_1-3F3_{u_1}G\tau_u u_1^4+F3G_{u_1}\tau_x u_1^3+F2G_{u_1}\tau_x u_1^2-10F3G\tau_u u_1^3-\\ \nonumber
&&7F2G\tau_u u_1^2-4F1G\tau_u u_1-G_x\xi -G_y\tau -G_u\phi=0.
\end{eqnarray}
Assume that
\begin{equation}\label{213}
F3(x,y,u,u_1)=\Sigma_{i={0}}^\infty F3_i(x,y,u)u_1^{i}.
\end{equation}
By equating the coefficient of $u_{1}$ to zero in (\ref{210}), we find
\begin{equation}\label{214}
   3 {\it F3_0}\left( x,y,u \right)  \tau_x    +2\,  {\it F2}\tau_u=0.
\end{equation}
We may just consider $F2\neq 0$ everywhere. Therefore we get
\begin{equation}\label{215}
\tau_u=r\tau_x,
\end{equation}
where $r$ is a certain analytic function in $(x,y,u)$, which intrinsically depends on the right-hand sides analytic functions $F2, F3$.

Using the coefficient of $u_1^2$ in (\ref{29}) we have
\begin{equation}\label{216}
\tau_y=r\xi_x+r\tau_x+r\phi_x+r\phi_u+r\xi+r\tau+r\phi,
\end{equation}
where $r$s are certain analytic functions in $(x,y,u)$, which intrinsically depend (but in a complex manner) on the right-hand sides analytic functions $F1, F2, F3$.

From the coefficient of $u_1^0$ in (\ref{210}) we obtain
\begin{equation}\label{217}
\xi_u=r\tau_u +r\tau_x.
\end{equation}
Using the coefficient of $u_1$ in eq(\ref{29}) we find 
\begin{equation}\label{218}
\xi_y=r\xi_x+r\tau_x+r\phi_x+r\tau_y+r\xi+r\tau+r\phi.
\end{equation}
From the coefficient of $u_1^0$ in  eq(\ref{29}) we get
\begin{equation}\label{219}
\phi_y=r\phi_x.
\end{equation}
Therefore all of the first order derivatives of $\xi, \tau$ and $\phi$ are determined by $C(x,y,u)$.

Now we want to show that all of the second order derivatives of $\xi, \tau$ and $\phi$ are determined by $C(x,y,u)$.

By differentiating with respect to $x$ and $u$ of the eq(\ref{215}) we respectively have
\begin{equation}\label{220}
\tau_{xu}=r\tau_{xx}+r\tau_x,
\end{equation}
and
\begin{equation}\label{221}
\tau_{uu}=r\tau_{xu}+r\tau_x .
\end{equation}
Using (\ref{217}) we obtain
\begin{equation}\label{222}
\xi_{xu}=r\tau_{xu}+r\tau_{xx}+r\tau_x+r\tau_u,
\end{equation}
and
\begin{equation}\label{223}
\xi_{uu}=r\tau_{uu}+r\tau_{xu}+r\tau_x+r\tau_u.
\end{equation}

From eq(\ref{219}) we find
\begin{equation}\label{224}
\phi_{xy}=r\phi_{xx}+r\phi_{x},
\end{equation}
and
\begin{equation}\label{225}
\phi_{yy}=r\phi_{xy}+r\phi_x.
\end{equation}
In eq(\ref{211}), using the coefficient of $u_1^0$ we get
\begin{equation}\label{226}
\phi_{xu}=\xi_{xx}+r\tau_{xx}+r\tau_x,
\end{equation}
and from the coefficient of $u_1$ we have
\begin{equation}\label{227}
\phi_{uu}=3\xi_{xu}+r\tau_{xu}+r\tau_{xx}+r\tau_x+r\tau_u .
\end{equation}
Using (\ref{216}) we obtain
\begin{equation}\label{228}
\tau_{xy}=r\phi_{xu}+r\phi_{xx}+r\xi_{xx}+r\tau_{xx}+r\xi_x+r\tau_x+r\phi_x+r\phi_u+r\xi+r\tau+r\phi.
\end{equation}
From eq(\ref{219}) we find
\begin{equation}\label{229}
\phi_{yu}=r\phi_{xu}+r\phi_x .
\end{equation}
Using eq(\ref{218}) we get
\begin{equation}\label{230}
\xi_{xy}=r\phi_{xx}+r\xi_{xx}+r\tau_{xy}+r\tau_{xx}+r\xi_x+r\tau_x+r\phi_x+r\tau_y+r\xi+r\tau+r\phi .
\end{equation}
From eq(\ref{216}) we have
\begin{equation}\label{231}
\tau_{yy}=r\phi_{xy}+r\phi_{yu}+r\xi_{xy}+r\tau_{xy}+r\xi_x+r\tau_x+r\phi_x+r\phi_u+r\xi_y+r\tau_y+r\phi_y+r\xi+r\tau+r\phi,
\end{equation}
and
\begin{equation}\label{232}
\tau_{yu}=r\phi_{xu}+r\phi_{uu}+r\xi_{xu}+r\tau_{xu}+r\xi_u+r\tau_u+r\xi_x+r\tau_x+r\phi_x+r\phi_u+r\xi+r\tau+r\phi.
\end{equation}
Using eq(\ref{218}) we obtain
\begin{equation}\label{233}
\xi_{yy}=r\tau_{xy}+r\phi_{xy}+r\xi_{xy}+r\tau_{yy}+r\xi_y+r\phi_y+r\xi_x+r\tau_x+r\phi_x+r\tau_y+r\xi+r\tau+r\phi,
\end{equation}
and
\begin{equation}\label{234}
\xi_{yu}=r\tau_{xu}+r\phi_{xu}+r\xi_{xu}+r\tau_{yu}+r\xi_u+r\tau_u+r\phi_u+r\xi_x+r\tau_x+r\phi_x+r\tau_y+r\xi+r\tau+r\phi.
\end{equation}

Now we want to show that all of the Third order derivatives of $\xi, \tau$ and $\phi$ are determined by $C(x,y,u)$.

Assume that
\begin{equation}\label{235}
G(x,y,u,u_1)=\Sigma_{i={0}}^\infty G_i(x,y,u)u_1^{i}.
\end{equation}
From the coefficient of $u_1^0$ in eq(\ref{212})we find
\begin{equation}\label{236}
\phi_{xxx}=r\xi+r\tau+r\phi+r\phi_u+r\xi_x+r\phi_x+r\tau_x.
\end{equation}
From eq(\ref{224}) we get
\begin{equation}\label{237}
\phi_{xxy}=r\phi_{xxx}+r\phi_{xx}+r\phi_x,
\end{equation}
and
\begin{equation}\label{238}
\phi_{xyy}=r\phi_{xxy}+r\phi_{xx}+r\phi_{xy}+r\phi_x.
\end{equation}
Using eq(\ref{219}) we have
\begin{equation}\label{239}
\phi_{yyy}=r\phi_{xyy}+r\phi_{xy}+r\phi_{x}.
\end{equation}
From the coefficient of $u_1^0$ in (\ref{211}) we obtain
\begin{equation}\label{240}
\xi_{xx}-\phi_{xu}+F1\tau_{xx}+r\tau_x=0.
\end{equation}
 So, by differentiating of (\ref{240}) with respect to $u$ we find
\begin{equation}\label{241}
\xi_{xxu}-\phi_{xuu}+F1\tau_{xxu}+r\tau_{xu}+r\tau_{xx}+r\tau_x=0.
\end{equation}
Using the coefficient of $u_1^2$ in (\ref{212}) we get
\begin{equation}\label{242}
\xi_{xxu}-\phi_{xuu}+F1\tau_{xxu}+\dfrac{F2}{3}\tau_{xxx}+r\tau_{xu}+r\tau_{xx}+r\xi_x +r\xi_u +r\tau_x +r\tau_u+r\phi_u+r\phi_x+r\xi+r\tau+r\phi=0 .
\end{equation}
Therefore, from (\ref{241}) and (\ref{242}) we have
\begin{equation}\label{243}
\tau_{xxx}=r\tau_{xu}+r\tau_{xx}+r\xi_x +r\xi_u +r\tau_x +r\tau_u+r\phi_u+r\phi_x+r\xi+r\tau+r\phi.
\end{equation}
Using (\ref{215})we obtain
\begin{equation}\label{244}
\tau_{xxu}=r\tau_{xxx}+r\tau_{xx}+r\tau_{x},
\end{equation}
and
\begin{equation}\label{245}
\tau_{xuu}=r\tau_{xxu}+r\tau_{xu}+r\tau_{xx}+r\tau_{x},
\end{equation}
and
\begin{equation}\label{246}
\tau_{uuu}=r\tau_{xuu}+r\tau_{xu}+r\tau_{x}.
\end{equation}
From (\ref{217}) we find
\begin{equation}\label{247}
\xi_{xxu}=r\tau_{xxu}+r\tau_{xxx}+r\tau_{xu}+r\tau_{xx}+r\tau_{u}+r\tau_{x}.
\end{equation}
Using (\ref{226})
\begin{equation}\label{248}
\phi_{xuu}=\xi_{xxu}+r\tau_{xxu}+r\tau_{xx}+r\tau_{xu}+r\tau_{x}.
\end{equation}
From (\ref{219}) we get
\begin{equation}\label{249}
\phi_{yuu}=r\phi_{xuu}+r\phi_{xu}+r\phi_{x}.
\end{equation}
Using (\ref{217}) we have
\begin{equation}\label{250}
\xi_{xuu}=r\tau_{xuu}+r\tau_{xxu}+r\tau_{xx}+r\tau_{xu}+r\tau_{uu}+r\tau_{x}+r\tau_{u},
\end{equation}
and
\begin{equation}\label{250}
\xi_{uuu}=r\tau_{uuu}+r\tau_{xuu}+r\tau_{xu}+r\tau_{uu}+r\tau_{x}+r\tau_{u}.
\end{equation}
From (\ref{227}) we obtain
\begin{equation}\label{252}
\phi_{uuu}=3\xi_{xuu}+r\tau_{xuu}+r\tau_{xxu}+r\tau_{uu}+r\tau_{xu}+r\tau_{xx}+r\tau_x+r\tau_u.
\end{equation}
Using the coefficient of $u_1$ in (\ref{212}) we find
\begin{equation}\label{253}
3\phi_{xxu}-\xi_{xxx}+r\tau_{xxx}+r\tau_{xx}+r\xi_x+r\xi_u+r\tau_x+r\tau_u+r\phi_x+r\phi_u+r\xi+r\tau+r\phi=0,
\end{equation}
and from (\ref{226}) we get
\begin{equation}\label{254}
\phi_{xxu}=\xi_{xxx}+r\tau_{xxx}+r\tau_{xx}+r\tau_{x}.
\end{equation}
Therefore, using (\ref{253}) and (\ref{254}) we have
\begin{equation}\label{255}
\phi_{xxu}=r\tau_{xxx}+r\tau_{xx}+r\xi_x+r\xi_u+r\tau_x+r\tau_u+r\phi_x+r\phi_u+r\xi+r\tau+r\phi.
\end{equation}
From (\ref{226}) we obtain
\begin{equation}\label{256}
\xi_{xxx}=\phi_{xxu}+r\tau_{xxx}+r\tau_{xx}+r\tau_{x}.
\end{equation}
Using (\ref{216}) we find
\begin{eqnarray}\label{257}
\tau_{xxy}&=&r\phi_{xxx}+r\phi_{xxu}+r\xi_{xxx}+r\tau_{xxx}+r\xi_{xx}+r\tau_{xx}+r\phi_{xx}+r\phi_{xu}+\\ \nonumber 
&&r\xi_x+r\tau_x+r\phi_x+r\phi_u+r\xi+r\tau+r\phi.
\end{eqnarray}
From (\ref{219})we get
\begin{equation}\label{258}
\phi_{xyu}=r\phi_{xxu}+r\phi_{xx}+r\phi_{xu}+r\phi_x.
\end{equation}
Using (\ref{230})we have
\begin{eqnarray}\label{259}
\xi_{xxy}&=&r\phi_{xxx}+r\xi_{xxx}+r\tau_{xxy}+r\tau_{xxx}+r\phi_{xx}+r\xi_{xx}+r\tau_{xy}+r\tau_{xx}+r\xi_x+\\ \nonumber 
&&r\tau_x+r\phi_x+r\tau_y+r\xi+r\tau+r\phi.
\end{eqnarray}
From (\ref{231})we obtain
\begin{eqnarray}\label{260}
\tau_{xyy}&=&r\phi_{xxy}+r\phi_{xyu}+r\xi_{xxy}+r\tau_{xxy}+r\xi_{xx}+r\tau_{xx}+r\phi_{xx}+r\phi_{xu}+r\phi_{xy}+\\ \nonumber 
&&r\phi_{yu}+r\xi_{xy}+r\tau_{xy}+r\xi_x+r\tau_x+r\phi_x+r\phi_u+r\xi_y+r\tau_y+r\phi_y+r\xi+r\tau+r\phi.
\end{eqnarray}
Using (\ref{228}) we find
\begin{eqnarray}\label{261}
\tau_{xyu}&=&r\phi_{xxu}+r\phi_{xuu}+r\xi_{xxu}+r\tau_{xxu}+r\phi_{xu}+r\phi_{xx}+r\xi_{xx}+r\tau_{xx}+r\xi_{xu}+\\ \nonumber 
&&r\tau_{xu}+r\phi_{uu}+r\xi_{u}+r\tau_u+r\xi_x+r\tau_x+r\phi_x+r\phi_u+r\xi+r\tau+r\phi.
\end{eqnarray}
Using (\ref{229}) we get
\begin{equation}\label{262}
\phi_{yyu}=r\phi_{xyu}+r\phi_{xu}+r\phi_{xy}+r\phi_x.
\end{equation}
From (\ref{230}) we have
\begin{eqnarray}\label{263}
\xi_{xyy}&=&r\phi_{xxy}+r\xi_{xxy}+r\tau_{xyy}+r\tau_{xxy}+r\phi_{xx}+r\xi_{xx}+r\tau_{xy}+r\tau_{xx}+r\tau_{yy}+\\ \nonumber 
&&r\phi_{xy}+r\tau_{xy}+r\xi_{xy}+r\phi_{y}+r\xi_{y}+r\xi_x+r\tau_x+r\phi_x+r\tau_y+r\xi+r\tau+r\phi.
\end{eqnarray}
Using (\ref{231}) we obtain
\begin{eqnarray}\label{264}
\tau_{yyy}&=&r\phi_{xyy}+r\phi_{yyu}+r\xi_{xyy}+r\tau_{xyy}+r\phi_{xy}+r\phi_{yu}+r\xi_{xy}+r\tau_{xy}+r\xi_{yy}+\\ \nonumber 
&&r\tau_{yy}+r\phi_{yy}+r\xi_y+r\tau_y+r\phi_y+r\xi_x+r\tau_x+r\phi_x+r\phi_u+r\xi+r\tau+r\phi.
\end{eqnarray}
From (\ref{230}) we find
\begin{eqnarray}\label{265}
\xi_{xyu}&=&r\phi_{xxu}+r\xi_{xxu}+r\tau_{xyu}+r\tau_{xxu}+r\xi_{xu}+r\tau_{xu}+r\phi_{xu}+r\tau_{yu}+r\phi_{xx}+\\ \nonumber 
&&r\xi_{xx}+r\tau_{xy}+r\tau_{xx}+r\xi_x+r\tau_u+r\phi_u+r\xi_u+r\tau_x+r\phi_x+r\tau_y+r\xi+r\tau+r\phi.
\end{eqnarray}
Using (\ref{231}) we get
\begin{eqnarray}\label{266}
\tau_{yyu}&=&r\phi_{xyu}+r\phi_{yuu}+r\xi_{xyu}+r\tau_{xyu}+r\phi_{xy}+r\phi_{yu}+r\xi_{xy}+r\tau_{xy}+\xi_{xu}+\\ \nonumber 
&&r\tau_{xu}+r\phi_{xu}+r\phi_{uu}+r\xi_{yu}+r\tau_{yu}+r\xi_u+r\tau_u+r\xi_x+r\tau_x+r\phi_x+r\phi_u+\\ \nonumber 
&&r\xi_y+r\tau_y+r\phi_y+r\xi+r\tau+r\phi.
\end{eqnarray}
From (\ref{232}) we have
\begin{eqnarray}\label{267}
\tau_{yuu}&=&r\phi_{xuu}+r\phi_{uuu}+r\xi_{xuu}+r\tau_{xuu}+r\phi_{xu}+r\phi_{uu}+r\xi_{xu}+r\tau_{xu}+\\ \nonumber 
&&r\xi_{uu}+r\tau_{uu}+r\xi_u+r\tau_u+r\xi_x+r\tau_x+r\phi_x+r\phi_u+r\xi+r\tau+r\phi.
\end{eqnarray}
Using (\ref{233}) we obtain
\begin{eqnarray}\label{268}
\xi_{yyy}&=&r\tau_{xyy}+r\phi_{xyy}+r\xi_{xyy}+r\tau_{yyy}+r\tau_{xy}+r\phi_{xy}+r\xi_{xy}+r\xi_{yy}+r\tau_{yy}+\\ \nonumber 
&&r\phi_{yy}+r\xi_y+r\phi_y+r\xi_x+r\tau_x+r\phi_x+r\tau_y+r\xi+r\tau+r\phi,
\end{eqnarray}
and
\begin{eqnarray}\label{269}
\xi_{yyu}&=&r\tau_{xyu}+r\phi_{xyu}+r\xi_{xyu}+r\tau_{yyu}+r\tau_{xy}+r\phi_{xy}+r\xi_{xy}+r\tau_{yy}+r\xi_{yu}+\\ \nonumber 
&&r\phi_{yu}+r\xi_{xu}+r\tau_{xu}+r\phi_{xu}+r\tau_{yu}+r\xi_u+r\tau_u+r\phi_u+
r\xi_y+\\ \nonumber 
&&r\phi_y+r\xi_x+r\tau_x+r\phi_x+r\tau_y+r\xi+r\tau+r\phi.
\end{eqnarray}
From (\ref{234}) we find
\begin{eqnarray}\label{270}
\xi_{yuu}&=&r\tau_{xuu}+r\phi_{xuu}+r\xi_{xuu}+r\tau_{yuu}+r\tau_{xu}+r\phi_{xu}+r\xi_{xu}+r\tau_{yu}+r\xi_u+\\ \nonumber 
&&r\tau_u+r\phi_u+r\xi_x+r\tau_x+r\phi_x+r\tau_y+r\xi+r\tau+r\phi.
\end{eqnarray}
From the chain rule, it follows that all of the partial derivatives of
$\xi$, $\tau$ and $\phi$ are determined by $C(x,y,u)$, thus completing the proof.
\end{proof}              

Now we want to show that this estimate is precise. For this, we consider the following PDE system 

\[
    (\MakeUppercase {S_m}):=\left\{
                \begin{array}{ll}
                  u_{{2}}=u_{{1}}^2\\
                   u_{{1,1,1}}=0
                \end{array}
              \right.
  \]
The determining equations for this system are:
\begin{eqnarray}\label{271}
&&\tau_u=0 , \quad \phi_x+\xi_y=0 , \quad 3\phi_{ux}-3\xi_{xx}=0, \quad \xi_u+2\tau_x=0 ,\quad 3\phi_{uxx}-\xi_{xxx}=0, \\ \nonumber 
&&\qquad 5\tau_{ux}+2\xi_{uu}=0, \quad 3\tau_{uux}+\xi_{uuu}=0, \quad 2\xi_x -\tau_y-\phi_u=0 , \quad 3\xi_{xu}+2\tau_{xx}-\phi_{uu}=0,  \\ \nonumber 
&&3\xi_{xuu}+3\tau_{uxx}-\phi_{uuu}=0,  \quad 3\xi_{xxu}+\tau_{xxx}-3\phi_{xuu}=0 , \quad \phi_y=0 , \quad \phi_{xxx}=0 , \quad \tau_u=0.
\end{eqnarray}
By solving the determining equations we have:
\begin{eqnarray}\label{272}
\xi&=&\dfrac{C_1}{2} x^2+\dfrac{C_2y-4C_3u+C_4+C_5}{2}x-(2C_1y+2C_6)u-2C_7y+C_8, \\ \nonumber 
\tau&=&\dfrac{C_2}{2}y^2+(C_1x+C_4)y+\dfrac{C_3}{2}x^2+C_6x+C_9,\\ \nonumber 
\phi&=&-\dfrac{C_2}{8}x^2+(C_1u+C_7)x-2C_3u^2+C_5u+C_{10},
\end{eqnarray}
where $C_1$, $C_2$, ..., $C_{10}$ are arbitrary constants.

So, the system ($S_M$) has a complex Lie algebra of dimension 10 generated by the holomorphic vector fields:
\begin{eqnarray}\label{273}
&&\partial x, \quad \partial y,\quad \partial u, \quad -2u\partial x+x\partial y,\quad x\partial u-2y\partial x,\\ \nonumber 
&&\quad \dfrac{x \partial x}{2}+y\partial y, \quad u\partial u+\dfrac{x \partial x}{2},\quad -2ux\partial x+\dfrac{1}{2}x^2\partial y-2 u ^2\partial u,\\ \nonumber 
&&\quad \dfrac{1}{2}xy\partial x+\dfrac{1}{2}y^2\partial y-\dfrac{1}{8} x ^2\partial u, \quad (\dfrac{x^2}{2}-2yu) \partial x+xy\partial y+xu\partial u.
\end{eqnarray}
\section{Conclusions}
In this paper we show that any infinitesimal Lie symmetry of a third PDE sysytem that is coming from CR-geometry is uniquely determined by ten initial Taylor coefficients. Also, by giving an example we show that this estimate is precise.

\end{document}